\newtheorem{theorem}{Theorem}
\theoremstyle{plain}
\newtheorem{corollary}{Corollary}
\newtheorem{definition}{Definition}
\newtheorem{lemma}{Lemma}
\newtheorem{remark}{Remark}
\numberwithin{equation}{section}
\begin{document}
\title[On new inequalities]{On new inequalities via Riemann-Liouville
fractional integration }
\author{Mehmet Zeki Sar\i kaya$^{\star \clubsuit }$}
\address{$^{\clubsuit }$Department of Mathematics,Faculty of Science and
Arts, D\"{u}zce University, D\"{u}zce, Turkey}
\email{sarikayamz@gmail.com}
\thanks{$^{\star }$corresponding author}
\author{Hasan Ogunmez$^{\blacklozenge }$}
\address{$^{\blacklozenge }$Department of Mathematics, \ Faculty of Science
and Arts, Afyon Kocatepe University, Afyon-TURKEY}
\email{hogunmez@aku.edu.tr}
\date{}
\subjclass[2000]{ 26D15, 41A55, 26D10 }
\keywords{Riemann-Liouville fractional integral, convex function, Ostrowski
inequality.}

\begin{abstract}
In this paper, we extend the Montogomery identities for the
Riemann-Liouville fractional integrals. We also use this Montogomery
identities to establish some new integral inequalities for convex functions.
\end{abstract}

\maketitle

\section{Introduction}

The inequality of Ostrowski \cite{Ostrowski} gives us an estimate for the
deviation of the values of a smooth function from its mean value. More
precisely, if $f:[a,b]\rightarrow \mathbb{R}$ is a differentiable function
with bounded derivative, then%
\begin{equation*}
\left\vert f(x)-\frac{1}{b-a}\int\limits_{a}^{b}f(t)dt\right\vert \leq \left[
\frac{1}{4}+\frac{(x-\frac{a+b}{2})^{2}}{(b-a)^{2}}\right] (b-a)\left\Vert
f^{\prime }\right\Vert _{\infty }
\end{equation*}
for every $x\in \lbrack a,b]$. Moreover the constant $1/4$ is the best
possible.

For some generalizations of this classic fact see the book \cite[p.468-484]%
{mitrovich} by Mitrinovic Pecaric and Fink. A simple proof of this fact can
be \ done by using the following identity \cite{mitrovich}:

If $f:[a,b]\rightarrow \mathbb{R}$ is differentiable on $[a,b]$ with the
first derivative $f^{\prime }$ integrable on $[a,b],$ then Montgomery
identity holds:%
\begin{equation*}
f(x)=\frac{1}{b-a}\int\limits_{a}^{b}f(t)dt+\int\limits_{a}^{b}P_{1}(x,t)f^{%
\prime }(t)dt,
\end{equation*}%
where $P_{1}(x,t)$ is the Peano kernel defined by%
\begin{equation*}
P_{1}(x,t):=\left\{ 
\begin{array}{ll}
\dfrac{t-a}{b-a}, & a\leq t<x \\ 
&  \\ 
\dfrac{t-b}{b-a}, & x\leq t\leq b.%
\end{array}%
\right. 
\end{equation*}%
Recently, several generalizations of the Ostrowski integral inequality are
considered by many authors; for instance covering the following concepts:
functions of bounded variation, Lipschitzian, monotonic, absolutely
continuous and $n$-times differentiable mappings with error estimates with
some special means together with some numerical quadrature rules. For recent
results and generalizations concerning Ostrowski's inequality, we refer the
reader to the recent papers  \cite{Cerone1}, \cite{Duo}, \cite{Dragomir}-%
\cite{Liu}, \cite{sarikaya}-\cite{sarikaya2}.

In this article, we use the Riemann-Liouville fractional integrals to
establish some new integral inequalities of Ostrowski's type. From our
results, the weighted and the classical Ostrowski's inequalities can be
deduced as some special cases.

\section{Fractional Calculus}

Firstly, we give some necessary definitions and mathematical preliminaries
of fractional calculus theory which are used further in this paper. More
details, one can consult \cite{gorenflo}, \cite{samko}.

\begin{definition}
The Riemann-Liouville fractional integral operator of order $\alpha \geq 0$
with $a\geq 0$ is defined as%
\begin{eqnarray*}
J_{a}^{\alpha }f(x) &=&\frac{1}{\Gamma (\alpha )}\dint\limits_{a}^{x}(x-t)^{%
\alpha -1}f(t)dt, \\
J_{a}^{0}f(x) &=&f(x).
\end{eqnarray*}
\end{definition}

Recently, many authors have studied a number of inequalities by used the
Riemann-Liouville fractional integrals, see (\cite{Anastassiou}, \cite%
{Belarbi}, \cite{Dahmani}, \cite{Dahmani1}) and the references cited therein.

\section{Main Results}

In order to prove our main results, we need the following identities, which
corrects the result proved in \cite{Anastassiou}:

\begin{lemma}
\label{lm} Let $f:I\subset \mathbb{R}\rightarrow \mathbb{R}$ be
differentiable function on $I^{\circ }$ with $a,b\in I$ ($a<b$) and $%
f^{\prime }\in L_{1}[a,b]$, then%
\begin{equation}
f(x)=\frac{\Gamma (\alpha )}{b-a}(b-x)^{1-\alpha }{\Large J}_{a}^{\alpha
}f(b)-{\Large J}_{a}^{\alpha -1}(P_{2}(x,b)f(b))+{\Large J}_{a}^{\alpha
}(P_{2}(x,b)f^{^{\prime }}(b)),\ \ \ \alpha \geq 1,  \label{1}
\end{equation}%
where $P_{2}(x,t)$ is the fractional Peano kernel defined by%
\begin{equation}
P_{2}(x,t):=\left\{ 
\begin{array}{ll}
\dfrac{t-a}{b-a}(b-x)^{1-\alpha }\Gamma (\alpha ), & a\leq t<x \\ 
&  \\ 
\dfrac{t-b}{b-a}(b-x)^{1-\alpha }\Gamma (\alpha ), & x\leq t\leq b.%
\end{array}%
\right.  \label{3}
\end{equation}
\end{lemma}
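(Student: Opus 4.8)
The plan is to establish the identity (\ref{1}) by directly expanding the fractional-integral expressions on the right-hand side using the Riemann-Liouville definition, substituting the explicit piecewise form (\ref{3}) of the kernel $P_2(x,t)$, and showing the result collapses to $f(x)$. Since $\alpha \geq 1$ throughout, the operators $J_a^{\alpha}$ and $J_a^{\alpha-1}$ are both well-defined; the key is that the kernel $P_2$ carries the factor $(b-x)^{1-\alpha}\Gamma(\alpha)$, which is precisely designed to cancel the $\frac{1}{\Gamma(\alpha)}$ and the $(b-x)^{\alpha-1}$ weights produced when the fractional operators are applied at the endpoint $b$.

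First I would write out the last term by definition,
\begin{equation*}
J_a^{\alpha}\!\left(P_2(x,b)f'(b)\right)=\frac{1}{\Gamma(\alpha)}\int_a^b (b-t)^{\alpha-1}P_2(x,t)f'(t)\,dt,
\end{equation*}
and similarly expand the middle term $J_a^{\alpha-1}\!\left(P_2(x,b)f(b)\right)=\frac{1}{\Gamma(\alpha-1)}\int_a^b(b-t)^{\alpha-2}P_2(x,t)f(t)\,dt$. Substituting the two branches of (\ref{3}) and pulling out the common constant $\frac{(b-x)^{1-\alpha}\Gamma(\alpha)}{b-a}$, the integral in the last term becomes $\int_a^b (b-t)^{\alpha-1}(t-a)f'(t)\,dt$ on $[a,x)$ together with $\int_a^b (b-t)^{\alpha-1}(t-b)f'(t)\,dt = -\int (b-t)^{\alpha}f'(t)\,dt$ on $[x,b]$. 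I would integrate each of these by parts, differentiating the polynomial weight and integrating $f'$, so as to produce an $f(x)$ boundary term at the splitting point $t=x$ plus remaining integrals against $f$.

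The main step is to see that the integrals against $f$ left over from integration by parts in the last term exactly reproduce the middle term $J_a^{\alpha-1}(P_2 f(b))$ and the first term $\frac{\Gamma(\alpha)}{b-a}(b-x)^{1-\alpha}J_a^{\alpha}f(b)$. Concretely, $\frac{d}{dt}(b-t)^{\alpha-1}=-(\alpha-1)(b-t)^{\alpha-2}$, and the factor $(\alpha-1)/\Gamma(\alpha)=1/\Gamma(\alpha-1)$ is what converts the $J^{\alpha}$-type remainder into the $J^{\alpha-1}$-type term with the correct normalization; the undifferentiated $(b-t)^{\alpha-1}f(t)$ pieces assemble into $J_a^{\alpha}f(b)$. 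The boundary terms at $t=a$ and $t=b$ vanish because of the factors $(t-a)$ and $(t-b)$ in the two branches, and the matching of the two one-sided limits at $t=x$ (both branches evaluate the kernel continuously up to the jump that is engineered to yield $f(x)$) produces the single clean term $f(x)$.

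The hard part will be bookkeeping the normalization constants and the sign at the interface $t=x$: one must verify that the boundary contribution from the left branch at $t=x^-$ and from the right branch at $t=x^+$ combine to give exactly $f(x)$ with coefficient $1$, rather than some multiple of it, and that no spurious term survives. This is exactly the point at which the cited result in \cite{Anastassiou} is being corrected, so I would be especially careful to confirm that the constant $\frac{(b-x)^{1-\alpha}\Gamma(\alpha)}{b-a}$ combines correctly with the $(b-x)^{\alpha-1}$ arising from evaluating the kernel at $t=x$, leaving the stated right-hand side with no residual factor of $(b-x)$ or $\Gamma(\alpha)$ attached to $f(x)$.
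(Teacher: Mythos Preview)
Your proposal is correct and follows essentially the same approach as the paper: expand the fractional integrals via the Riemann--Liouville definition, split at $t=x$ according to the piecewise kernel, integrate the $f'$-term by parts, and check that the boundary contributions at $t=x$ combine to $(b-x)^{\alpha-1}(b-a)f(x)$ (hence $f(x)$ after the normalizing constant) while the leftover $f$-integrals reproduce the $J_a^{\alpha}f(b)$ and $J_a^{\alpha-1}(P_2 f)(b)$ terms. The only cosmetic difference is that the paper carries out the computation with the unscaled kernel $P_1$ and multiplies through by $(b-x)^{1-\alpha}$ at the very end, whereas you work directly with $P_2$; the algebra is identical.
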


\begin{proof}
By definition of $P_{1}(x,t)$, we have%
\begin{eqnarray*}
\Gamma (\alpha ){\Large J}_{a}^{\alpha }(P_{1}(x,b)f^{^{\prime }}(b))
&=&\int\limits_{a}^{b}(b-t)^{\alpha -1}P_{1}(x,t)f^{^{\prime }}(t)dt \\
&& \\
&=&\int\limits_{a}^{x}(b-t)^{\alpha -1}\left( \frac{t-a}{b-a}\right)
f^{^{\prime }}(t)dt+\int\limits_{x}^{b}(b-t)^{\alpha -1}\left( \frac{t-b}{b-a%
}\right) f^{^{\prime }}(t)dt \\
&& \\
&=&\frac{1}{b-a}\left[ \int\limits_{a}^{x}(b-t)^{\alpha -1}(t-a)f^{^{\prime
}}(t)dt-\int\limits_{x}^{b}(b-t)^{\alpha }f^{^{\prime }}(t)dt\right] \\
&& \\
&=&\frac{1}{b-a}(I_{1}+I_{2}).
\end{eqnarray*}%
Integrating by parts, we can state:%
\begin{eqnarray}
I_{1} &=&\left. (b-t)^{\alpha -1}(t-a)f(t)\right\vert
_{a}^{x}-\int\limits_{a}^{x}[-(\alpha -1)(b-t)^{\alpha
-2}(t-a)+(b-t)^{\alpha -1}]~f(t)dt  \notag \\
&&  \label{4} \\
&=&(b-x)^{\alpha -1}(x-a)f(x)+(\alpha -1)\int\limits_{a}^{x}(b-t)^{\alpha
-2}(t-a)~f(t)dt-\int\limits_{a}^{x}(b-t)^{\alpha -1}~f(t)dt  \notag
\end{eqnarray}%
and similary,%
\begin{equation}
I_{2}=\left. -(b-t)^{\alpha }f(t)\right\vert _{x}^{b}-\alpha
\int\limits_{x}^{b}(b-t)^{\alpha -1}~f(t)dt=(b-x)^{\alpha }f(x)-\alpha
\int\limits_{x}^{b}(b-t)^{\alpha -1}~f(t)dt  \label{5}
\end{equation}%
Adding (\ref{4}) and (\ref{5}), we get%
\begin{eqnarray*}
\Gamma (\alpha )J_{a}^{\alpha }(P_{1}(x,b)f^{^{\prime }}(b)) &=&\frac{1}{b-a}%
\left\{ (b-a)(b-x)^{\alpha -1}f(x)+(\alpha
-1)\int\limits_{a}^{x}(b-t)^{\alpha -2}(t-a)f(t)dt\right. \\
&& \\
&&\left. -\alpha \int\limits_{x}^{b}(b-t)^{\alpha
-1}f(t)dt-\int\limits_{a}^{x}(b-t)^{\alpha -1}f(t)dt\right\} .
\end{eqnarray*}%
If we add and subtract the integral $(\alpha
-1)\int\limits_{x}^{b}(b-t)^{\alpha -2}(t-b)f(t)dt$ to the right hand side
of the equation above, then we have 
\begin{eqnarray*}
\Gamma (\alpha )J_{a}^{\alpha }(P_{1}(x,b)f^{^{\prime }}(b)) &=&\frac{1}{b-a}%
\left\{ (b-a)(b-x)^{\alpha -1}f(x)\right. \\
&& \\
&&\left. +(b-a)(\alpha -1)\int\limits_{a}^{b}(b-t)^{\alpha
-2}P_{1}(x,t)f(t)dt-\int\limits_{a}^{b}(b-t)^{\alpha -1}f(t)dt\right\} \\
&& \\
&=&(b-x)^{\alpha -1}f(x)+(\alpha -1)\int\limits_{a}^{b}(b-t)^{\alpha
-2}P_{1}(x,t)f(t)dt-\frac{1}{b-a}\int\limits_{a}^{b}(b-t)^{\alpha -1}f(t)dt
\\
&& \\
&=&(b-x)^{\alpha -1}f(x)-\frac{\Gamma (\alpha )}{b-a}{\Large J}_{a}^{\alpha
}f(b)+\Gamma (\alpha ){\Large J}_{a}^{\alpha -1}(P_{1}(x,b)f(b)).
\end{eqnarray*}%
Multiplying the both sides by $(b-x)^{1-\alpha }$, we obtain%
\begin{equation*}
{\Large J}_{a}^{\alpha }(P_{2}(x,b)f^{^{\prime }}(b))=f(x)-\frac{\Gamma
(\alpha )}{b-a}(b-x)^{1-\alpha }{\Large J}_{a}^{\alpha }f(b)+{\Large J}%
_{a}^{\alpha -1}(P_{2}(x,b)f(b))
\end{equation*}%
and so%
\begin{equation*}
f(x)=\frac{\Gamma (\alpha )}{b-a}(b-x)^{1-\alpha }{\Large J}_{a}^{\alpha
}f(b)-{\Large J}_{a}^{\alpha -1}(P_{2}(x,b)f(b))+{\Large J}_{a}^{\alpha
}(P_{2}(x,b)f^{^{\prime }}(b)).
\end{equation*}%
This completes the proof.
\end{proof}

\begin{remark}
If we choose $\alpha =1$, the formula (\ref{1}) reduces to the classical
Montgomery Identity with $P_{2}(x,b)f(b)=0,~x\leq t\leq b$. However, because
of the paces of the $<$ and $\leq $ signs in \cite{Anastassiou}, the
corresp\i nding result therein does not reduce to the classical Montgomery
Identity.
\end{remark}

\begin{theorem}
\label{thm1} Let $f:[a,b]\rightarrow \mathbb{R}$ be a convex function on $%
[a,b].$ Then for any $x\in (a,b),$ the following inequality holds:%
\begin{eqnarray}
&&\frac{1}{\alpha (\alpha +1)}\left[ \alpha \frac{(b-x)^{2}}{b-a}%
f_{+}^{\prime }(x)-\left( (b-a)^{\alpha }(b-x)^{1-\alpha }+\alpha \frac{%
(b-x)^{2}}{b-a}-(\alpha +1)(b-x)\right) f_{-}^{\prime }(x)\right]  \notag \\
&&  \label{11} \\
&\leq &\frac{\Gamma (\alpha )}{b-a}(b-x)^{1-\alpha }{\Large J}_{a}^{\alpha
}f(b)-{\Large J}_{a}^{\alpha -1}(P_{2}(x,b)f(b))-f(x),\ \alpha \geq 1. 
\notag
\end{eqnarray}
\end{theorem}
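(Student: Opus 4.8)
The plan is to read off the right-hand side of \eqref{11} directly from the Montgomery identity \eqref{1}. Solving \eqref{1} for the combination that appears on the right of \eqref{11} gives
\[
\frac{\Gamma(\alpha)}{b-a}(b-x)^{1-\alpha}J_a^\alpha f(b)-J_a^{\alpha-1}(P_2(x,b)f(b))-f(x)=-J_a^\alpha\!\left(P_2(x,b)f'(b)\right),
\]
so the whole problem reduces to bounding the single term $-J_a^\alpha(P_2(x,b)f'(b))$ from below. Using the relation $P_2(x,t)=\Gamma(\alpha)(b-x)^{1-\alpha}P_1(x,t)$ that is built into \eqref{3}, I would rewrite this term as
\[
-J_a^\alpha\!\left(P_2(x,b)f'(b)\right)=-(b-x)^{1-\alpha}\left[\int_a^x (b-t)^{\alpha-1}\frac{t-a}{b-a}f'(t)\,dt+\int_x^b (b-t)^{\alpha-1}\frac{t-b}{b-a}f'(t)\,dt\right],
\]
which isolates the two intervals on which the Peano kernel $P_1$ keeps a fixed sign.

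The key step is to exploit convexity. Since $f$ is convex, $f'$ is non-decreasing, so $f'(t)\le f_-^{\prime}(x)$ for $t\in[a,x]$ and $f'(t)\ge f_+^{\prime}(x)$ for $t\in[x,b]$. On $[a,x]$ the weight $(b-t)^{\alpha-1}\frac{t-a}{b-a}$ is non-negative, while on $[x,b]$ the weight $(b-t)^{\alpha-1}\frac{t-b}{b-a}$ is non-positive; in each case replacing $f'(t)$ by the corresponding one-sided derivative increases the integrand, giving
\[
\int_a^b (b-t)^{\alpha-1}P_1(x,t)f'(t)\,dt\le f_-^{\prime}(x)\,A+f_+^{\prime}(x)\,B,
\]
where $A=\int_a^x (b-t)^{\alpha-1}\frac{t-a}{b-a}\,dt$ and $B=\int_x^b (b-t)^{\alpha-1}\frac{t-b}{b-a}\,dt$. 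Multiplying by the negative factor $-(b-x)^{1-\alpha}$ reverses the inequality and produces exactly the lower bound $-J_a^\alpha(P_2(x,b)f'(b))\ge -(b-x)^{1-\alpha}(f_-^{\prime}(x)A+f_+^{\prime}(x)B)$.

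What remains is the elementary evaluation of $A$ and $B$. The substitution $u=b-t$ turns $B$ into $-\frac{1}{b-a}\int_0^{b-x}u^\alpha\,du=-\frac{(b-x)^{\alpha+1}}{(b-a)(\alpha+1)}$, so that $-(b-x)^{1-\alpha}f_+^{\prime}(x)B=\frac{(b-x)^2}{(b-a)(\alpha+1)}f_+^{\prime}(x)$, matching the $f_+^{\prime}$ term of \eqref{11}. The same substitution gives $A=\frac{(b-a)^\alpha-(b-x)^\alpha}{\alpha}-\frac{(b-a)^{\alpha+1}-(b-x)^{\alpha+1}}{(b-a)(\alpha+1)}$; multiplying by $-(b-x)^{1-\alpha}$ and collecting over the common denominator $\alpha(\alpha+1)$ reproduces the $f_-^{\prime}$ coefficient $-\frac{1}{\alpha(\alpha+1)}\big((b-a)^\alpha(b-x)^{1-\alpha}+\alpha\frac{(b-x)^2}{b-a}-(\alpha+1)(b-x)\big)$. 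The main obstacle is purely this bookkeeping: the cancellations in $A$, especially keeping track of the factor $(b-x)^{1-\alpha}$ against the powers $(b-x)^\alpha$ and $(b-x)^{\alpha+1}$, are where an error would most easily creep in, since the analytic content is entirely contained in the monotonicity-of-$f'$ estimate.
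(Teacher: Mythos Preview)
Your proposal is correct and follows essentially the same route as the paper's own proof: both start from the Montgomery identity \eqref{1} to express the right side of \eqref{11} as $-J_a^\alpha(P_2(x,b)f'(b))$, split the integral at $x$, use the monotonicity of $f'$ (i.e.\ $f'(t)\le f_-'(x)$ on $[a,x]$ and $f'(t)\ge f_+'(x)$ on $[x,b]$) against the sign of the Peano weight, and then evaluate the two resulting elementary integrals. Your computations of $A$ and $B$ agree with the paper's formulas \eqref{15} and \eqref{16} after dividing by $b-a$.
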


\begin{proof}
From Lemma \ref{lm}, we have%
\begin{eqnarray}
&&f(x)-\frac{\Gamma (\alpha )}{b-a}(b-x)^{1-\alpha }{\Large J}_{a}^{\alpha
}f(b)+{\Large J}_{a}^{\alpha -1}(P_{2}(x,b)f(b))  \notag \\
&&  \label{12} \\
&=&\frac{(b-x)^{1-\alpha }}{b-a}\left[ \int\limits_{a}^{x}(b-t)^{\alpha
-1}\left( t-a\right) f^{^{\prime }}(t)dt-\int\limits_{x}^{b}(b-t)^{\alpha
}f^{^{\prime }}(t)dt\right] .  \notag
\end{eqnarray}%
Since $f$ is convex, then for any $x\in (a,b)$ we have the following
inequalities%
\begin{equation}
f^{\prime }(t)\leq f_{-}^{\prime }(x)\text{ \ \ \ for \ a.e. }t\in \lbrack
a,x]  \label{13}
\end{equation}%
\begin{equation}
f^{\prime }(t)\geq f_{+}^{\prime }(x)\text{ \ \ \ for \ a.e. }t\in \lbrack
x,b].  \label{14}
\end{equation}%
If we multiply (\ref{13}) by $(b-t)^{\alpha -1}\left( t-a\right) \geq 0,\
t\in \lbrack a,x],\ \alpha \geq 1$ and integrate on $[a,x],$ we get%
\begin{eqnarray}
&&\int\limits_{a}^{x}(b-t)^{\alpha -1}\left( t-a\right) f^{^{\prime
}}(t)dt\leq \int\limits_{a}^{x}(b-t)^{\alpha -1}\left( t-a\right)
f_{-}^{^{\prime }}(t)dt  \notag \\
&&  \label{15} \\
&=&\frac{1}{\alpha (\alpha +1)}\left[ (b-a)^{\alpha +1}+(b-x)^{\alpha }\left[
\alpha (b-x)-(\alpha +1)\left( b-a\right) \right] \right] f_{-}^{\prime }(x)
\notag
\end{eqnarray}%
and if we multiply (\ref{14}) by $(b-t)^{\alpha }\geq 0,\ t\in \lbrack
x,b],\ \alpha \geq 1$ and integrate on $[x,b],$ we also get 
\begin{equation}
\int\limits_{x}^{b}(b-t)^{\alpha }f^{^{\prime }}(t)dt\geq
\int\limits_{x}^{b}(b-t)^{\alpha }f_{+}^{^{\prime }}(t)dt=\frac{%
(b-x)^{\alpha +1}}{\alpha +1}f_{+}^{\prime }(x).  \label{16}
\end{equation}%
Finally, if we subtract (\ref{16}) from (\ref{15}) and use the
representtation (\ref{12}) we deduce the desired inequality (\ref{11}).
\end{proof}

\begin{corollary}
\label{cr} Under the assumptions Theorem \ref{thm1} with $\alpha =1,$ we have%
\begin{equation*}
\frac{1}{2}\left[ (b-x)^{2}f_{+}^{\prime }(x)-(a-x)^{2}f_{-}^{\prime }(x)%
\right] \leq \int\limits_{a}^{b}f(t)dt-(b-a)f(x).
\end{equation*}
\end{corollary}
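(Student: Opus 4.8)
The plan is to specialize Theorem~\ref{thm1} to the case $\alpha=1$ and identify each term. First I would substitute $\alpha=1$ into the left-hand side of inequality~(\ref{11}). The prefactor $\frac{1}{\alpha(\alpha+1)}$ becomes $\frac{1}{2}$, the coefficient of $f_{+}^{\prime}(x)$ is $\alpha\frac{(b-x)^2}{b-a}=\frac{(b-x)^2}{b-a}$, and the coefficient of $f_{-}^{\prime}(x)$ is $(b-a)^{\alpha}(b-x)^{1-\alpha}+\alpha\frac{(b-x)^2}{b-a}-(\alpha+1)(b-x)$, which at $\alpha=1$ reduces to $(b-a)+\frac{(b-x)^2}{b-a}-2(b-x)$. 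I would then combine this over the common denominator $b-a$, so this expression equals $\frac{(b-a)^2-2(b-x)(b-a)+(b-x)^2}{b-a}=\frac{\bigl((b-a)-(b-x)\bigr)^2}{b-a}=\frac{(x-a)^2}{b-a}$. Hence the full left-hand side collapses to $\frac{1}{2}\bigl[\frac{(b-x)^2}{b-a}f_{+}^{\prime}(x)-\frac{(x-a)^2}{b-a}f_{-}^{\prime}(x)\bigr]$.

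Next I would treat the right-hand side. Setting $\alpha=1$ gives $\Gamma(1)=1$ and $(b-x)^{1-\alpha}=(b-x)^0=1$, so the first term $\frac{\Gamma(\alpha)}{b-a}(b-x)^{1-\alpha}J_{a}^{\alpha}f(b)$ becomes $\frac{1}{b-a}J_{a}^{1}f(b)=\frac{1}{b-a}\int_a^b f(t)\,dt$. For the middle term, I would invoke the Remark following Lemma~\ref{lm}, which records that at $\alpha=1$ the kernel contribution $J_{a}^{\alpha-1}(P_2(x,b)f(b))$ vanishes (consistent with the reduction to the classical Montgomery identity where $P_2(x,b)f(b)=0$ on the relevant piece). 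Thus the right-hand side simplifies to $\frac{1}{b-a}\int_a^b f(t)\,dt-f(x)$.

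Equating the two simplified sides yields
\begin{equation*}
\frac{1}{2}\left[\frac{(b-x)^2}{b-a}f_{+}^{\prime}(x)-\frac{(x-a)^2}{b-a}f_{-}^{\prime}(x)\right]\leq \frac{1}{b-a}\int_a^b f(t)\,dt-f(x),
\end{equation*}
and multiplying through by $b-a$ gives the claimed form once I observe that $(x-a)^2=(a-x)^2$. I would finally note that the stated corollary writes the inequality without the common factor $\frac{1}{b-a}$ on the right, so multiplying both sides by $b-a$ converts $\frac{1}{b-a}\int_a^b f(t)\,dt-f(x)$ into $\int_a^b f(t)\,dt-(b-a)f(x)$ and rescales the bracket to $\frac{1}{2}\bigl[(b-x)^2 f_{+}^{\prime}(x)-(a-x)^2 f_{-}^{\prime}(x)\bigr]$.

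The only genuinely delicate step is the algebraic collapse of the $f_{-}^{\prime}(x)$ coefficient into the perfect square $(x-a)^2$; I expect this to be the main place where a sign or a term could be mishandled, so I would verify the factorization $(b-a)^2-2(b-x)(b-a)+(b-x)^2=\bigl((b-a)-(b-x)\bigr)^2$ explicitly. Everything else is a direct substitution of $\alpha=1$ together with the vanishing of the fractional kernel term recorded in the Remark.
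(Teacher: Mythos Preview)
Your proposal is correct and follows exactly the approach the paper intends: the corollary is simply the specialization $\alpha=1$ of Theorem~\ref{thm1}, and your algebraic reduction of the $f_{-}'(x)$ coefficient to $\frac{(x-a)^2}{b-a}$ together with the vanishing of $J_a^{0}(P_2(x,b)f(b))=P_2(x,b)f(b)=0$ (as in the Remark after Lemma~\ref{lm}) are precisely the computations needed. The paper itself gives no further detail beyond noting that this case recovers Dragomir's result.
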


The proof of the Corrollary \ref{cr} is proved by Dragomir in \cite%
{Dragomir1}. Hence, our results in Theorem \ref{thm1} are generalizations of
the corresponding results of Dragomir \cite{Dragomir1}.

\begin{remark}
If we take $x=\frac{a+b}{2}$ in Corollary \ref{cr}, we get%
\begin{equation*}
0\leq \frac{b-a}{8}\left[ f_{+}^{\prime }(\frac{a+b}{2})-f_{-}^{\prime }(%
\frac{a+b}{2})\right] \leq \frac{1}{b-a}\int\limits_{a}^{b}f(t)dt-f(\frac{a+b%
}{2}).
\end{equation*}
\end{remark}

\begin{theorem}
\label{thm2} Let $f:[a,b]\rightarrow \mathbb{R}$ be a convex function on $%
[a,b].$ Then for any $x\in \lbrack a,b],$ the following inequality holds:%
\begin{eqnarray}
&&\frac{\Gamma (\alpha )}{b-a}(b-x)^{1-\alpha }{\Large J}_{a}^{\alpha }f(b)-%
{\Large J}_{a}^{\alpha -1}(P_{2}(x,b)f(b))-f(x)  \notag \\
&&  \label{17} \\
&\leq &\frac{1}{\alpha (\alpha +1)}\left[ \alpha \frac{(b-x)^{2}}{b-a}%
f_{-}^{\prime }(b)-\left( (b-a)^{\alpha }(b-x)^{1-\alpha }+\alpha \frac{%
(b-x)^{2}}{b-a}-(\alpha +1)(b-x)\right) f_{+}^{^{\prime }}(a)\right] ,\ \ \
\ \alpha \geq 1.  \notag
\end{eqnarray}
\end{theorem}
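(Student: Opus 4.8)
The plan is to run the argument of Theorem~\ref{thm1} but to exploit convexity at the endpoints $a,b$ rather than at the interior point $x$, which reverses the sense of the inequality. I start from identity~(\ref{12}); multiplying it by $-1$ and rearranging expresses the target quantity as
\begin{equation*}
\frac{\Gamma(\alpha)}{b-a}(b-x)^{1-\alpha}J_a^\alpha f(b)-J_a^{\alpha-1}(P_2(x,b)f(b))-f(x)=\frac{(b-x)^{1-\alpha}}{b-a}\left[\int_x^b(b-t)^\alpha f'(t)\,dt-\int_a^x(b-t)^{\alpha-1}(t-a)f'(t)\,dt\right].
\end{equation*}
Write the bracket as $B-A$, with $B$ and $A$ the two integrals in this order; the prefactor $(b-x)^{1-\alpha}/(b-a)$ is strictly positive.

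The convexity of $f$ is equivalent to $f'$ being non-decreasing, so for a.e. $t\in[a,x]$ we have $f'(t)\geq f_+'(a)$ and for a.e. $t\in[x,b]$ we have $f'(t)\leq f_-'(b)$. Multiplying the first by the nonnegative weight $(b-t)^{\alpha-1}(t-a)$ and integrating over $[a,x]$ bounds $A$ from below; multiplying the second by $(b-t)^\alpha\geq0$ and integrating over $[x,b]$ bounds $B$ from above. Consequently $B-A$ is bounded above, and since the prefactor is positive the entire left-hand quantity is bounded above---exactly the direction of~(\ref{17}), which is opposite to~(\ref{11}) precisely because here I compare $f'$ outward to the endpoints rather than inward to $x$.

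It remains to insert the two elementary integral evaluations, both already computed in the proof of Theorem~\ref{thm1}: $\int_x^b(b-t)^\alpha\,dt=(b-x)^{\alpha+1}/(\alpha+1)$ and $\int_a^x(b-t)^{\alpha-1}(t-a)\,dt=\frac{1}{\alpha(\alpha+1)}\bigl[(b-a)^{\alpha+1}+(b-x)^\alpha(\alpha(b-x)-(\alpha+1)(b-a))\bigr]$. Substituting these, carrying the factor $(b-x)^{1-\alpha}/(b-a)$ through, and collecting terms turns the $f_-'(b)$ contribution into $\alpha(b-x)^2/(b-a)$ and the $f_+'(a)$ contribution into $(b-a)^\alpha(b-x)^{1-\alpha}+\alpha(b-x)^2/(b-a)-(\alpha+1)(b-x)$, each divided by $\alpha(\alpha+1)$; this is the right-hand side of~(\ref{17}).

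The argument has no genuine difficulty since it is structurally the mirror image of Theorem~\ref{thm1}. The only point requiring care is the coordinated double sign reversal: one must pair the \emph{lower} bound on $A$ with the \emph{upper} bound on $B$, and check that after negating~(\ref{12}) this indeed produces an upper (not lower) bound for the target expression. The integral computations themselves are routine and may simply be quoted from the earlier proof.
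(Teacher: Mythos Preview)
Your proof is correct and follows essentially the same approach as the paper: starting from the representation~(\ref{12}), using the convexity bounds $f'(t)\geq f_+'(a)$ on $[a,x]$ and $f'(t)\leq f_-'(b)$ on $[x,b]$ against the same nonnegative weights, and substituting the identical integral evaluations. The only cosmetic difference is that you explicitly multiply~(\ref{12}) by $-1$ before estimating, whereas the paper phrases it as ``subtract~(\ref{20}) from~(\ref{21})'', but the content is the same.
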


\begin{proof}
Assume that $f_{+}^{\prime }(a)$ and $f_{-}^{\prime }(b)$\ are finite. Since 
$f$ is convex on $[a,b]$, then we have the following inequalities%
\begin{equation}
f^{\prime }(t)\geq f_{+}^{\prime }(a)\text{ \ \ \ for \ a.e. }t\in \lbrack
a,x]  \label{18}
\end{equation}%
\begin{equation}
f^{\prime }(t)\leq f_{-}^{\prime }(b)\text{ \ \ \ for \ a.e. }t\in \lbrack
x,b].  \label{19}
\end{equation}%
If we multiply (\ref{18}) by $(b-t)^{\alpha -1}\left( t-a\right) \geq 0,\
t\in \lbrack a,x],\ \alpha \geq 1$ and integrate on $[a,x],$ we have%
\begin{eqnarray}
&&\int\limits_{a}^{x}(b-t)^{\alpha -1}\left( t-a\right) f^{^{\prime
}}(t)dt\geq \int\limits_{a}^{x}(b-t)^{\alpha -1}\left( t-a\right)
f_{+}^{^{\prime }}(a)dt  \notag \\
&&  \label{20} \\
&=&\frac{1}{\alpha (\alpha +1)}\left[ (b-a)^{\alpha +1}+(b-x)^{\alpha }\left[
\alpha (b-x)-(\alpha +1)\left( b-a\right) \right] \right] f_{+}^{^{\prime
}}(a)  \notag
\end{eqnarray}%
and if we multiply (\ref{19}) by $(b-t)^{\alpha }\geq 0,\ t\in \lbrack
x,b],\ \alpha \geq 1$ and integrate on $[x,b],$ we also have 
\begin{equation}
\int\limits_{x}^{b}(b-t)^{\alpha }f^{^{\prime }}(t)dt\leq
\int\limits_{x}^{b}(b-t)^{\alpha }f_{+}^{^{\prime }}(t)dt=\frac{%
(b-x)^{\alpha +1}}{\alpha +1}f_{-}^{\prime }(b).  \label{21}
\end{equation}%
Finally, if we subtract (\ref{20}) from (\ref{21}) and use the
representtation (\ref{12}) we deduce the desired inequality (\ref{17}).
\end{proof}

\begin{corollary}
\label{cr1} Under the assumptions Theorem \ref{thm2} with $\alpha =1,$ we
have%
\begin{equation*}
\int\limits_{a}^{b}f(t)dt-(b-a)f(x)\leq \frac{1}{2}\left[ (b-x)^{2}f_{-}^{%
\prime }(b)-(a-x)^{2}f_{+}^{\prime }(a)\right] .
\end{equation*}
\end{corollary}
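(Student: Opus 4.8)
The plan is to obtain Corollary~\ref{cr1} as the special case $\alpha = 1$ of Theorem~\ref{thm2}, so the entire argument reduces to substituting $\alpha = 1$ into inequality (\ref{17}) and simplifying each side. This mirrors exactly how Corollary~\ref{cr} is deduced from Theorem~\ref{thm1}.

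First I would simplify the left-hand side of (\ref{17}). Since $\Gamma(1) = 1$ and $(b-x)^{1-\alpha} = (b-x)^{0} = 1$, the operator ${\Large J}_{a}^{\alpha}$ with $\alpha = 1$ reduces to the ordinary integral, giving $\frac{\Gamma(1)}{b-a}(b-x)^{0}{\Large J}_{a}^{1}f(b) = \frac{1}{b-a}\int_{a}^{b}f(t)\,dt$. The operator ${\Large J}_{a}^{\alpha-1} = {\Large J}_{a}^{0}$ is the identity, and in the regime $x \leq t \leq b$ the kernel (\ref{3}) at $t = b$ evaluates to $\frac{b-b}{b-a}(b-x)^{0}\Gamma(1) = 0$, so the term ${\Large J}_{a}^{0}(P_{2}(x,b)f(b)) = P_{2}(x,b)f(b)$ vanishes. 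Hence the left-hand side collapses to $\frac{1}{b-a}\int_{a}^{b}f(t)\,dt - f(x)$.

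Next I would treat the right-hand side. With $\alpha = 1$ the prefactor $\frac{1}{\alpha(\alpha+1)}$ equals $\frac{1}{2}$, and the coefficient of $f_{-}^{\prime}(b)$ becomes $\frac{(b-x)^{2}}{b-a}$. The only computation requiring attention is the coefficient of $f_{+}^{\prime}(a)$, namely $(b-a)^{1}(b-x)^{0} + \frac{(b-x)^{2}}{b-a} - 2(b-x)$. Placing this over the common denominator $b-a$ yields
\[
\frac{(b-a)^{2} - 2(b-a)(b-x) + (b-x)^{2}}{b-a} = \frac{\bigl[(b-a)-(b-x)\bigr]^{2}}{b-a} = \frac{(a-x)^{2}}{b-a},
\]
the expected perfect square. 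Thus the right-hand side equals $\frac{1}{2(b-a)}\bigl[(b-x)^{2}f_{-}^{\prime}(b) - (a-x)^{2}f_{+}^{\prime}(a)\bigr]$.

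Finally I would multiply the resulting inequality through by $b - a > 0$, which clears the common factor $\frac{1}{b-a}$ from both sides and produces exactly the asserted bound $\int_{a}^{b}f(t)\,dt - (b-a)f(x) \leq \frac{1}{2}\bigl[(b-x)^{2}f_{-}^{\prime}(b) - (a-x)^{2}f_{+}^{\prime}(a)\bigr]$. There is no genuine obstacle here: the statement is a direct specialization, and the one nontrivial moment is recognizing the perfect-square collapse displayed above, which is entirely routine.
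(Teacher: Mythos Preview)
Your proposal is correct and follows exactly the approach the paper intends: the corollary is presented as the direct specialization $\alpha=1$ of Theorem~\ref{thm2}, and your substitution and simplification of both sides of (\ref{17}) --- including the vanishing of $P_{2}(x,b)f(b)$ and the perfect-square collapse of the $f_{+}^{\prime}(a)$ coefficient --- are all accurate. The paper itself does not write out these details, merely noting that the resulting inequality coincides with Dragomir's result in \cite{Dragomir1}.
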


The proof of the Corrollary \ref{cr1} is proved by Dragomir in \cite%
{Dragomir1}. So, our results in Theorem \ref{thm2} are generalizations of
the corresponding results of Dragomir \cite{Dragomir1}.

\begin{remark}
If we take $x=\frac{a+b}{2}$ in Corollary \ref{cr1}, we get%
\begin{equation*}
0\leq \frac{1}{b-a}\int\limits_{a}^{b}f(t)dt-f(\frac{a+b}{2})\leq \frac{b-a}{%
8}\left[ f_{-}^{\prime }(b)-f_{+}^{\prime }(a)\right] .
\end{equation*}
\end{remark}

Now, we extend the Lemma \ref{lm} as follows:

\begin{theorem}
\label{thm} Let $f:I\subset \mathbb{R}\rightarrow \mathbb{R}$ be twice
differentiable function on $I^{\circ }$ with $f^{\prime }\in L_{1}[a,b]$,
then the following identity holds:%
\begin{equation}
\begin{array}{l}
(1-2\lambda )f(x)=\frac{\Gamma (\alpha )}{b-a}(b-x)^{1-\alpha }{\Large J}%
_{a}^{\alpha }f(b)-\lambda \left( \frac{b-a}{b-x}\right) ^{\alpha -1}f(a) \\ 
\\ 
\ \ \ \ \ \ \ \ \ \ \ \ \ \ \ \ \ \ \ \ \ \ \ \ \ \ \ \ \ \ \ \ -{\Large J}%
_{a}^{\alpha -1}(P_{3}(x,b)f(b))+{\Large J}_{a}^{\alpha
}(P_{3}(x,b)f^{^{\prime }}(b)),\ \ \ \alpha \geq 1,%
\end{array}
\label{6}
\end{equation}%
where $P_{3}(x,t)$ is the fractional Peano kernel defined by%
\begin{equation}
P_{3}(x,t):=\left\{ 
\begin{array}{ll}
\dfrac{t-(1-\lambda )a-\lambda b}{b-a}(b-x)^{1-\alpha }\Gamma (\alpha ), & 
a\leq t<x \\ 
&  \\ 
\dfrac{t-(1-\lambda )b-\lambda a}{b-a}(b-x)^{1-\alpha }\Gamma (\alpha ), & 
x\leq t\leq b%
\end{array}%
\right.  \label{7}
\end{equation}%
for $0\leq \lambda \leq 1.$
\end{theorem}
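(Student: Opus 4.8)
The plan is to reduce the identity (\ref{6}) to the already-established Lemma \ref{lm} by observing that the new kernel $P_{3}$ differs from $P_{2}$ only by a piecewise constant. Comparing (\ref{7}) with (\ref{3}), on $[a,x)$ one has $t-(1-\lambda )a-\lambda b=(t-a)-\lambda (b-a)$, while on $[x,b]$ one has $t-(1-\lambda )b-\lambda a=(t-b)+\lambda (b-a)$. Hence, setting $Q(x,t):=P_{3}(x,t)-P_{2}(x,t)$, a direct comparison gives
$$Q(x,t)=\lambda (b-x)^{1-\alpha }\Gamma (\alpha )\,\sigma (t),\qquad \sigma (t)=\begin{cases}-1, & a\leq t<x,\\ +1, & x\leq t\leq b.\end{cases}$$
By linearity of the Riemann--Liouville operator, $J_{a}^{\alpha }(P_{3}(x,b)f'(b))=J_{a}^{\alpha }(P_{2}(x,b)f'(b))+J_{a}^{\alpha }(Q(x,b)f'(b))$ and similarly for the order $\alpha -1$ term. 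After substituting Lemma \ref{lm} for the $P_{2}$ part (which contributes exactly $f(x)$), it therefore suffices to evaluate the correction $J_{a}^{\alpha }(Q(x,b)f'(b))-J_{a}^{\alpha -1}(Q(x,b)f(b))$.

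For the first correction term I would unfold the definition and integrate by parts on each subinterval:
$$J_{a}^{\alpha }(Q(x,b)f'(b))=\lambda (b-x)^{1-\alpha }\left[ -\int_{a}^{x}(b-t)^{\alpha -1}f'(t)\,dt+\int_{x}^{b}(b-t)^{\alpha -1}f'(t)\,dt\right].$$
Using $\tfrac{d}{dt}(b-t)^{\alpha -1}=-(\alpha -1)(b-t)^{\alpha -2}$, and noting that the boundary term at $t=b$ vanishes for $\alpha >1$, integration by parts produces the boundary contributions $-2(b-x)^{\alpha -1}f(x)+(b-a)^{\alpha -1}f(a)$ together with $(\alpha -1)\bigl[-\int_{a}^{x}(b-t)^{\alpha -2}f\,dt+\int_{x}^{b}(b-t)^{\alpha -2}f\,dt\bigr]$. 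Multiplying through by $\lambda (b-x)^{1-\alpha }$ and using $(b-x)^{1-\alpha }(b-a)^{\alpha -1}=\bigl(\tfrac{b-a}{b-x}\bigr)^{\alpha -1}$ yields
$$J_{a}^{\alpha }(Q(x,b)f'(b))=-2\lambda f(x)+\lambda \left( \frac{b-a}{b-x}\right)^{\alpha -1}f(a)+\lambda (\alpha -1)(b-x)^{1-\alpha }\left[ -\int_{a}^{x}(b-t)^{\alpha -2}f\,dt+\int_{x}^{b}(b-t)^{\alpha -2}f\,dt\right].$$

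The key observation, and the step where the real work lies, is that the remaining integral bracket is precisely $J_{a}^{\alpha -1}(Q(x,b)f(b))$: unfolding that term gives the same two integrals multiplied by $\lambda (b-x)^{1-\alpha }\Gamma (\alpha )/\Gamma (\alpha -1)=\lambda (\alpha -1)(b-x)^{1-\alpha }$. The factor $\Gamma (\alpha )/\Gamma (\alpha -1)=\alpha -1$ is exactly what makes the two correction terms line up, so the fractional-integral-of-$f$ pieces cancel and
$$J_{a}^{\alpha }(Q(x,b)f'(b))-J_{a}^{\alpha -1}(Q(x,b)f(b))=-2\lambda f(x)+\lambda \left( \frac{b-a}{b-x}\right)^{\alpha -1}f(a).$$
Feeding this together with the Lemma \ref{lm} value $f(x)$ of the $P_{2}$ part back into the right-hand side of (\ref{6}), the explicit term $-\lambda \bigl(\tfrac{b-a}{b-x}\bigr)^{\alpha -1}f(a)$ cancels the $f(a)$ contribution and the surviving $f(x)$ terms combine to $(1-2\lambda )f(x)$, which is exactly the left-hand side. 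The only genuine obstacle is the bookkeeping of the boundary terms in the integration by parts and the verification of the cancellation; everything else is linearity and the reduction to Lemma \ref{lm}.
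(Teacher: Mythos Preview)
Your argument is correct, and it takes a genuinely different route from the paper's own proof. The paper simply repeats the integration-by-parts computation of Lemma~\ref{lm} with $P_{3}$ in place of $P_{2}$: it expands $\Gamma(\alpha)J_{a}^{\alpha}(P_{3}(x,b)f'(b))$ as the sum of two pieces $J_{1}$ and $J_{2}$ over $[a,x]$ and $[x,b]$, integrates each by parts, and then reassembles the result (in fact rather tersely, writing only ``by similar way in proof of the Lemma~\ref{lm}'' and quoting the outcomes for $J_{1}$, $J_{2}$). Your approach is more structural: by writing $P_{3}=P_{2}+Q$ with $Q$ piecewise constant, you invoke Lemma~\ref{lm} wholesale for the $P_{2}$ part and reduce the new content to the single correction $J_{a}^{\alpha}(Qf')-J_{a}^{\alpha-1}(Qf)$, whose integration by parts is simpler (no linear factor in $t$) and whose $(\alpha-1)$-weighted integral terms cancel exactly against $J_{a}^{\alpha-1}(Qf)$ via $\Gamma(\alpha)/\Gamma(\alpha-1)=\alpha-1$. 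The payoff of your route is that it isolates precisely where the new terms $-2\lambda f(x)$ and $\lambda\bigl(\tfrac{b-a}{b-x}\bigr)^{\alpha-1}f(a)$ originate, and it avoids redoing the full Lemma~\ref{lm} computation; the paper's route is more direct but essentially duplicates the earlier work. One small caveat: your remark that the boundary term at $t=b$ vanishes holds for $\alpha>1$; at $\alpha=1$ one should read $J_{a}^{0}$ as the identity and check the endpoint separately, but that case collapses to the classical identity anyway.
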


\begin{proof}
By similar way in proof of the Lemma \ref{lm}, we have%
\begin{eqnarray}
\Gamma (\alpha ){\Large J}_{a}^{\alpha }(P_{3}(x,b)f^{^{\prime }}(b))
&=&\int\limits_{a}^{b}(b-t)^{\alpha -1}P_{3}(x,t)f^{^{\prime }}(t)dt  \notag
\\
&&  \notag \\
&=&\frac{\Gamma (\alpha )(b-x)^{1-\alpha }}{b-a}\left[ \int%
\limits_{a}^{x}(b-t)^{\alpha -1}\left( t-(1-\lambda )a-\lambda b\right)
f^{^{\prime }}(t)dt\right.  \notag \\
&&  \label{8} \\
&&\left. +\int\limits_{x}^{b}(b-t)^{\alpha -1}\left( t-(1-\lambda )b-\lambda
a\right) f^{^{\prime }}(t)dt\right]  \notag \\
&&  \notag \\
&=&\frac{\Gamma (\alpha )(b-x)^{1-\alpha }}{b-a}(J_{1}+J_{2}).  \notag
\end{eqnarray}%
Integrating by parts, we can state:%
\begin{eqnarray*}
J_{1} &=&(b-x)^{\alpha -1}(x-(1-\lambda )a-\lambda b)f(x)+(b-a)^{\alpha }f(a)
\\
&& \\
&&+(\alpha -1)\int\limits_{a}^{x}(b-t)^{\alpha -2}(t-(1-\lambda )a-\lambda
b)~f(t)dt-\int\limits_{a}^{x}(b-t)^{\alpha -1}~f(t)dt
\end{eqnarray*}%
and similary,%
\begin{eqnarray*}
J_{2} &=&-(b-x)^{\alpha }(x-(1-\lambda )b-\lambda a)f(x) \\
&& \\
&&+(\alpha -1)\int\limits_{x}^{b}(b-t)^{\alpha -2}(t-(1-\lambda )a-\lambda
b)~f(t)dt-\int\limits_{x}^{b}(b-t)^{\alpha -1}~f(t)dt.
\end{eqnarray*}%
Thus, using $J_{1}$ and $J_{2}$ in (\ref{8}) we get (\ref{6}) which
completes the proof.
\end{proof}

\begin{remark}
We note that in the special cases, if we take $\lambda =0$ in Theorem \ref%
{thm}, then we get (\ref{1}) with the kernel $P_{2}(x,t)$.
\end{remark}

\begin{theorem}
\label{thm3} Let $f:[a,b]\rightarrow \mathbb{R}$ be differentiable on $(a,b)$
such that $f^{^{\prime }}\in L_{1}[a,b],$ where $a<b.$ If $\left\vert
f^{^{\prime }}(x)\right\vert \leq M$ for every $x\in \lbrack a,b]$ and $%
\alpha \geq 1$, then the following inequalty holds:%
\begin{eqnarray}
&&\left\vert (1-2\lambda )f(x)-\frac{\Gamma (\alpha )}{b-a}(b-x)^{1-\alpha }%
{\Large J}_{a}^{\alpha }f(b)+\lambda \left( \frac{b-a}{b-x}\right) ^{\alpha
-1}f(a)+{\Large J}_{a}^{\alpha -1}(P_{3}(x,b)f(b))\right\vert  \notag \\
&&  \notag \\
&\leq &\frac{M}{\alpha \left( \alpha +1\right) }\left\{ (b-a)^{\alpha
}(b-x)^{1-\alpha }\left[ 2\lambda ^{\alpha +1}+2(1-\lambda )^{\alpha
+1}+\lambda (b-a)-1\right] \right.  \notag \\
&&  \label{9} \\
&&\left. +(b-x)\left[ 2\alpha \frac{b-x}{b-a}-\left( \alpha +1\right) \right]
\right\} .  \notag
\end{eqnarray}

\begin{proof}
From Theorem \ref{thm}, we get%
\begin{eqnarray}
&&\left\vert (1-2\lambda )f(x)=\frac{\Gamma (\alpha )}{b-a}(b-x)^{1-\alpha }%
{\Large J}_{a}^{\alpha }f(b)-\lambda \left( \frac{b-a}{b-x}\right) ^{\alpha
-1}f(a)\right\vert  \notag \\
&&  \notag \\
&\leq &\frac{1}{\Gamma (\alpha )}\int\limits_{a}^{b}(b-t)^{\alpha
-1}\left\vert P_{3}(x,t)\right\vert \left\vert f^{^{\prime }}(t)\right\vert
dt  \notag \\
&&  \notag \\
&=&\frac{(b-x)^{1-\alpha }}{b-a}\left[ \int\limits_{a}^{x}(b-t)^{\alpha
-1}\left\vert t-(1-\lambda )a-\lambda b\right\vert \left\vert f^{^{\prime
}}(t)\right\vert dt\right.  \notag \\
&&  \label{10} \\
&&\left. +\int\limits_{x}^{b}(b-t)^{\alpha -1}\left\vert t-(1-\lambda
)b-\lambda a\right\vert \left\vert f^{^{\prime }}(t)\right\vert dt\right] 
\notag \\
&&  \notag \\
&\leq &\frac{M(b-x)^{1-\alpha }}{b-a}\left\{
\int\limits_{a}^{x}(b-t)^{\alpha -1}\left\vert t-(1-\lambda )a-\lambda
b\right\vert dt+\int\limits_{x}^{b}(b-t)^{\alpha -1}\left\vert t-(1-\lambda
)b-\lambda a\right\vert dt\right\}  \notag \\
&&  \notag \\
&=&\frac{M(b-x)^{1-\alpha }}{b-a}\left\{ J_{3}+J_{4}\right\} .  \notag
\end{eqnarray}%
By simple computation, we obtain%
\begin{eqnarray*}
J_{3} &=&\int\limits_{a}^{x}(b-t)^{\alpha -1}\left\vert t-(1-\lambda
)a-\lambda b\right\vert dt \\
&& \\
&=&\int\limits_{a}^{\lambda b+(1-\lambda )a}(b-t)^{\alpha -1}\left( \lambda
b+(1-\lambda )a-t\right) dt+\int\limits_{\lambda b+(1-\lambda
)a}^{x}(b-t)^{\alpha -1}\left( t-\lambda b-(1-\lambda )a\right) dt \\
&& \\
&=&\frac{(b-a)^{\alpha +1}}{\alpha \left( \alpha +1\right) }\left[
2(1-\lambda )^{\alpha +1}+\lambda (b-a)-1\right] +\frac{(b-x)^{\alpha }}{%
\alpha \left( \alpha +1\right) }\left[ \alpha (b-x)-(1-\lambda )(b-a)\left(
\alpha +1\right) \right]
\end{eqnarray*}%
and similarly%
\begin{eqnarray*}
J_{4} &=&\int\limits_{x}^{b}(b-t)^{\alpha -1}\left\vert t-(1-\lambda
)b-\lambda a\right\vert dt \\
&& \\
&=&\int\limits_{x}^{\lambda a+(1-\lambda )b}(b-t)^{\alpha -1}\left( \lambda
a+(1-\lambda )b-t\right) dt+\int\limits_{\lambda a+(1-\lambda
)b}^{b}(b-t)^{\alpha -1}\left( t-\lambda a-(1-\lambda )b\right) dt \\
&& \\
&=&\frac{2\lambda ^{\alpha +1}(b-a)^{\alpha +1}}{\alpha (\alpha +1)}+\frac{%
(b-x)^{\alpha }}{\alpha \left( \alpha +1\right) }\left[ \alpha (b-x)-\lambda
(b-a)\left( \alpha +1\right) \right] .
\end{eqnarray*}%
Using $J_{3}$ and $J_{4}$ in (\ref{10}), we obtain (\ref{9}).
\end{proof}
\end{theorem}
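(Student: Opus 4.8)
The plan is to derive \eqref{9} directly from the identity \eqref{6} of Theorem~\ref{thm}. Rearranging \eqref{6} to isolate the fractional integral of the derivative gives
\[
J_a^{\alpha}(P_3(x,b)f'(b)) = (1-2\lambda)f(x) - \frac{\Gamma(\alpha)}{b-a}(b-x)^{1-\alpha} J_a^{\alpha}f(b) + \lambda\Big(\tfrac{b-a}{b-x}\Big)^{\alpha-1}f(a) + J_a^{\alpha-1}(P_3(x,b)f(b)).
\]
The right-hand side here is exactly the expression enclosed in the absolute-value bars on the left of \eqref{9}, so it suffices to estimate $\big|J_a^{\alpha}(P_3(x,b)f'(b))\big|$.

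First I would write out the fractional integral as $J_a^{\alpha}(P_3(x,b)f'(b)) = \frac{1}{\Gamma(\alpha)}\int_a^b (b-t)^{\alpha-1}P_3(x,t)f'(t)\,dt$, pass the absolute value inside the integral via the triangle inequality, and apply the hypothesis $|f'(t)|\le M$. Inserting the piecewise definition \eqref{7} of $P_3$ cancels the factor $\Gamma(\alpha)(b-x)^{1-\alpha}$ against the prefactor, and produces the bound
\[
\big|J_a^{\alpha}(P_3(x,b)f'(b))\big| \le \frac{M(b-x)^{1-\alpha}}{b-a}\big(J_3 + J_4\big),
\]
where $J_3 = \int_a^x (b-t)^{\alpha-1}\,|t-(1-\lambda)a-\lambda b|\,dt$ and $J_4 = \int_x^b (b-t)^{\alpha-1}\,|t-(1-\lambda)b-\lambda a|\,dt$, matching the structure in \eqref{10}.

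The computational core is the explicit evaluation of $J_3$ and $J_4$. Each integrand carries an absolute value whose argument vanishes at an interior convex-combination point, namely $t=\lambda b+(1-\lambda)a$ for $J_3$ and $t=\lambda a+(1-\lambda)b$ for $J_4$. I would split each integral at its own sign-change point, remove the absolute value with the correct sign on each subinterval, and then integrate the elementary pieces $(b-t)^{\alpha-1}(t-c)$ by writing $t-c=(t-b)+(b-c)$ so that only the two powers $(b-t)^{\alpha}$ and $(b-t)^{\alpha-1}$ need to be antidifferentiated. Collecting the resulting closed forms and substituting them back into the displayed bound yields precisely \eqref{9}.

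The main obstacle is the case bookkeeping around these sign changes rather than any single integration. The splitting used above is literally valid only when each sign-change point lies in the relevant subinterval, that is, when $a+\lambda(b-a)\le x\le b-\lambda(b-a)$; outside this range the absolute values keep a fixed sign on $[a,x]$ or $[x,b]$ and the corresponding integrals take a different closed form. One therefore must either restrict attention to $x$ and $\lambda$ in this regime or carry the remaining configurations through separately, and the algebraic consolidation of the four contributions into the compact bracketed coefficients of \eqref{9} is where sign errors are most likely to creep in.
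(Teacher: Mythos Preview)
Your proposal is correct and follows essentially the same route as the paper: invoke the identity \eqref{6}, bound $|f'|\le M$ inside the fractional integral to reduce to $J_3+J_4$, split each at its sign-change point, and evaluate. Your caveat about the range $a+\lambda(b-a)\le x\le b-\lambda(b-a)$ is well taken---the paper implicitly assumes this configuration without comment.
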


\begin{remark}
We note that in the special cases, if we take $\lambda =0$ in Theorem \ref%
{thm3}, then it reduces Theorem 4.1 proved by Anastassiou et. al. \cite%
{Anastassiou}. So, our results are generalizations of the corresponding
results of Anastassiou et. al. \cite{Anastassiou}.
\end{remark}

\end{document}